\newtheorem{thm}{Theorem}
\newtheorem{dfn}[thm]{Definition}
\newtheorem{lem}[thm]{Lemma}
\newtheorem{exm}[thm]{Example}
\newtheorem{prop}[thm]{Proposition}
\newtheorem{rem}[thm]{Remark}
\newtheorem{cor}[thm]{Corollary}
\newcommand{\dN}{\mathds{N}}
\newcommand{\dR}{\mathds{R}}
\newcommand{\dC}{\mathds{C}}
\newcommand{\cJ}{\mathcal{J}\,}
\newcommand{\cM}{\mathcal{M}\,}
\newcommand{\cD}{\mathcal{D}\,}
\newcommand{\Pos}{\mathrm{Pos}}
\newcommand{\ov}{\overline}
\newcommand{\supp}{\mathrm{supp}\,}
\newcommand{\cA}{\mathcal{A}}
\newcommand{\ii}{\rm i}
\newcommand{\cN}{\mathcal{N}}
\newcommand{\cZ}{\mathcal{Z}}
\newcommand{\sA}{\mathsf{A}}
\newcommand{\im}{\mathsf{i}}
\begin{document}

\begin{title}{On Moment Functionals with Signed Representing Measures}
\end{title}
\author{Konrad Schm\"udgen}
\address{University of Leipzig, Mathematical Institute, Augustusplatz 10/11, D-04109 Leipzig, Germany}
\email{\tt schmuedgen@math.uni-leipzig.de}

\date{\today}

\dedicatory{\today}

\begin{abstract}
Suppose that $\sA$ is a finitely generated commutative unital real algebra and $K$ is a closed subset of the set $\hat{A}$ of characters of $\sA$. We study the following problem: When is {\it each} linear functional $L:{\sA} \to \dR$   an integral with respect to some signed Radon measure on $\hat{\sA}$ supported by the set $K$? A complete characterization of the sets $K$ and algebras $\sA$  by necessary and sufficient conditions is given. The result is applied to the polynomial algebra $\dR[x_1,\dots,x_d]$ and subsets $K$ of $\dR^d$.
\end{abstract}
\subjclass[2020]{44A60}

\keywords{moment problem, moment sequences of signed measures}

\maketitle

\maketitle
\section{Introduction}

R. P. Boas \cite{boas} showed that {\it each} real sequence is the moment sequence of some {\it signed} Radon measure on $\dR$, see also \cite[Theorem 3.11]{shohat} for a proof. In terms of functionals this means that each linear functional on the polynomial algebra $\dR[x]$ can be represented as an integral by some signed Radon measure on $\dR$. This result was sharpened  by G. Polya  \cite{polya} who proved that  the  measure can be chosen such that its support is  any  real sequence without finite cluster points. A. Duran \cite{duran} studied the case when the signed measure has a Schwartz space density. T. Sherman \cite{sherman} generalized Boas' theorem to $d$-sequences on $\dR^d$ and on $[0,+\infty)^d$ for $d\in \dN$. 

In this paper we investigate the following problem: \smallskip

 {\it  Which closed subsets $K$ of $\dR^d$ have the property that each real $d$-sequence is a  moment sequence with a signed representing measure whose support is contained in $K$}?
\smallskip

As it is common,  moment problems are studied in terms of   functionals rather than moment sequences. To each $d$-sequence $s=(s_n)_{n\in \dN_0^d}$ one associates  a linear functional 
$L_s$, called the Riesz functional of $s$, on the polynomial algebra $\dR[x_1,\dots,x_d]$ by $L_s(x^n)=s_n, n\in \dN_0^d$. Then moment $d$-sequences correspond to moment  functionals on  $\dR[x_1,\dots,x_d]$ and the above problem can be rephrased by asking when is each linear functional  $L:\dR[x_1,\dots,x_d]\to \dR$  a  moment functional with signed representing  measure supported by $K$.

The corresponding  general problem  for an  arbitrary commutative finitely generated real unital algebra  is completely  settled in Theorem \ref{main1} by giving  necessary and sufficient conditions.  The case of polynomial algebras is obtained as an application of Theorem \ref{main1} and is stated in Theorem \ref{main2}.

This paper is organized as follows.
In Section \ref{prelim}  we  introduce 
 the necessary  terminology and state our main result (Theorem \ref{main1}).  In Section \ref{polyn} we give the corresponding applications to polynomial algebras  (Theorem \ref{main2}) and develop some corollaries and examples

Sections \ref{prelimth2} and \ref{proofth2} are devoted to the proof of  Theorem \ref{main1}. The crucial technical ingredient is  a result about topologies on unbounded operator algebras which was proved in \cite{sch80}. Note that Sherman's theorem \cite{sherman} (even more, a generalization to enveloping algebras of Lie algebras) has been derived in \cite{sch78} in a similar manner.
\section{Terminology and Main Results}\label{prelim}
 Throughout this paper, $\sA$ denotes a {\bf commutative finitely generated real unital algebra} and $\{a_1,\dots,a_d\}$ is a fixed set of algebra generators of $\sA$. The unit element of $\sA$ is denoted by $1$. Our guiding examples of algebras $\sA$ are the polynomial algebras $\dR[x_1,\dots,x_d]$ for $d\in \dN$.

 A {\it character} of $\sA$ is an algebra homomorphism $\chi:\sA\mapsto \dR $ such that $\chi(1)=1$. Let $\hat{\sA}$ denote the set of characters of $\sA$.

Since  $\{a_1,\dots,a_d\}$ is a  set of algebra generators,   there exists a unique surjective  unital algebra homomorphism $\pi: \dR_d[x]\to {\sA}$ such that $\pi(x_j)=d_j,$  $j=1,\dots,d$. If $\cJ$ denotes the kernel  of $\pi$, then $\cJ$ is an ideal of $\dR_d[x]$ and ${\sA}$ is isomorphic to the quotient algebra\, $\dR_d[x]/ \cJ,$ that is, 
${\sA}\cong \dR_d[x]/ \cJ$.
Each character $\chi$ of ${\sA}$ is  uniquely determined by  the point $x_\chi:=(\chi(a_1),\dots,\chi(a_d))$ of $\dR^d$.  We identify $\chi$ with $x_\chi$ and write $f(x_\chi):=\chi(f)$ for $f\in {\sA}$.  That is, $f(x)$ always denotes the values of the character $x\in \hat{\sA}$ at $f\in \sA$. Under this identification,\, $\hat{{\sA}}$ becomes the real algebraic set 
\begin{eqnarray}\label{Ahatvariety}
\hat{{\sA}}=\cZ(\cJ):=\{x\in \dR^d:p(x)=0~{\rm for} ~p\in \cJ\}.
\end{eqnarray}
Since $\cZ(\cJ)$ is closed in $\dR^d$, $\hat{{\sA}}$ is a locally compact Hausdorff space in the induced topology of $\dR^d$. The elements of ${\sA}$ are real polynomials $p(a_1,\dots,a_d)$ in the generators $a_1,\dots,a_d$  and they are continuous functions on $\hat{\sA}.$ Note that in the  case ${\sA}=\dR_d[x]$ we can take $a_1=x_1,\dots,a_d=x_d$\, and obtain $\hat{{\sA}}= \dR^d$. 

Let $M_+(\hat{\sA})$ denote the set of Radon measures $\mu$ on the locally compact Hausdorff space $\hat{\sA}$ such that all $f\in \sA$ are $\mu$-integrable. Since $1\in \sA$, all measures of $M_+(\hat{\sA})$ are finite. Let $M(\hat{\sA})$ be the set of differences $\mu=\mu_1-\mu_2$ of Radon measures $\mu_1,\mu_2\in M_+(\hat{\sA})$. The elements of $M(\hat{\sA}) $ are  {\it signed} Radon measures. We say that $\mu$ is supported by a subset $K$ of $\hat{\sA}$ if $\supp \mu_1\subseteq K$ and $\supp \mu_2\subseteq K$.
\begin{dfn}
Let $\mu_1,\mu_2 \in M_+(\hat{\sA})$. The linear functional $L$ on $\sA$ defined by
\begin{align*}
L(f)=\int f(x)\, d\mu_1(x)- \int f(x)\, d\mu_2(x), ~~f\in \sA,
\end{align*}
 is called a \emph{ general moment functional} and $\mu:=\mu_1-\mu_2$ is called a {\it representing signed measure} of $L$.
\end{dfn}

We want to study and solve the following problem:\smallskip

 {\it Given a closed subset $K$ of $\hat{\sA}$, when is each linear functional on $\sA$ a general moment functional with representing signed measure supported by $K$? }
\smallskip

This question is  settled  by Theorem \ref{main1} which is  also the main result of this paper. For this  the following linear  subspaces $\cN_n(K)$ of $\sA$ are needed: 
Suppose $K$ is  a  subset of $\hat{\sA}$. For  $n\in \dN_0$ we define
\begin{align*}
\cN_n(K):=\{ a \in &\sA: \textrm{ There exists a number}~~ C_a>0~ \textrm {such that}\\ & |a(x)|\leq C_a (1+a_1(x)^2+ \cdots+ a_d(x)^2)^n~~ \textrm{for all} ~~ x\in K\, \}.
\end{align*}
Further, we  say that $K$ {\it separates the points of $\sA$} if $a(x)=0$ for all $x\in K$ and for some $a\in {\sA}$ implies that  $a=0$.
\begin{thm}\label{main1}   Suppose that  $\{a_1,\dots,a_d\}$, $d\in \dN_0$,  is a  set of generators of the commutative unital real algebra $\sA$ and  $K$ is a closed subset of $\hat{\sA}$. Then the following statements are equivalent:
\begin{itemize}
\item[ (i)] Each linear functional on $\sA$ is a general moment functional with representing measure supported by $K$.
\item[ (ii)]  $K$ separates the points of $\sA$ and   the linear  space $\cN_n(K)$   is finite-dimensional for each $n\in \dN_0$.
\end{itemize}
\end{thm}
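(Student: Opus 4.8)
The plan is to recast representability as a single domination inequality and then treat the two implications separately. First I would record the following reformulation: a linear functional $L$ on $\sA$ is a general moment functional with representing measure supported by $K$ if and only if there exist $\mu\in M_+(\hat\sA)$ with $\supp\mu\subseteq K$ and a constant $C>0$ such that $|L(a)|\le C\int_K|a(x)|\,d\mu(x)$ for all $a\in\sA$. One direction is immediate, since $|L(a)|\le\int|a|\,d\mu_1+\int|a|\,d\mu_2=\int|a|\,d(\mu_1+\mu_2)$ with $\mu_1+\mu_2\in M_+(\hat\sA)$. For the converse, the inequality says that $L$ descends to a bounded linear functional on the image of $\sA$ in the real space $L^1(K,\mu)$ (well defined since $\int|a|\,d\mu=0$ forces $L(a)=0$); extending by Hahn--Banach and using $L^1(K,\mu)^*=L^\infty(K,\mu)$ produces $h\in L^\infty(\mu)$ with $L(a)=\int_K a\,h\,d\mu$, and splitting $h=h_+-h_-$ into its positive and negative parts yields $\mu_1=h_+\mu$ and $\mu_2=h_-\mu$, both in $M_+(\hat\sA)$ because $\|h\|_\infty<\infty$ and $\mu$ already integrates all of $\sA$.

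With this equivalence in hand, the necessity of (ii) is quick. If (i) holds and some nonzero $a$ vanished on all of $K$, any $L$ with $L(a)=1$ would satisfy $1=|L(a)|\le C\int_K|a|\,d\mu=0$, which is absurd; hence $K$ separates the points of $\sA$. Because $K$ separates points, $\|a\|_n:=\sup_{x\in K}|a(x)|/\rho(x)^n$, where $\rho:=1+a_1^2+\cdots+a_d^2$, is a genuine norm on $\mathcal{N}_n(K)$. If some $\mathcal{N}_N(K)$ were infinite-dimensional it would carry a $\|\cdot\|_N$-unbounded linear functional; extending this to $L\in\sA^*$ and invoking (i) produces $\mu,C$ with $|L(a)|\le C\int_K|a|\,d\mu\le C\big(\int_K\rho^N\,d\mu\big)\|a\|_N$ for $a\in\mathcal{N}_N(K)$, where $\int_K\rho^N\,d\mu<\infty$ since $\rho^N\in\sA$. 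This forces boundedness, a contradiction, so every $\mathcal{N}_n(K)$ is finite-dimensional.

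For the sufficiency I would first note that $\sA=\bigcup_{n}\mathcal{N}_n(K)$: an element $a=p(a_1,\dots,a_d)$ of degree $r$ satisfies $|a(x)|=|p(x)|\le C(1+|x|^2)^{\lceil r/2\rceil}=C\rho(x)^{\lceil r/2\rceil}$ on $K\subseteq\dR^d$. Thus, under (ii), $\sA$ is an increasing union of finite-dimensional spaces. On each finite-dimensional $\mathcal{N}_n(K)$ all norms are equivalent, and since $K$ separates points one can choose finitely many points of $K$ whose evaluations norm $\mathcal{N}_n(K)$; scaling an atomic measure on these points yields $\sigma_n\in M_+(\hat\sA)$ with $\supp\sigma_n\subseteq K$ and $|L(a)|\le\int_K|a|\,d\sigma_n$ for all $a\in\mathcal{N}_n(K)$. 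The goal is then to assemble, after rescaling, a single measure $\mu=\sum_n\sigma_n\in M_+(\hat\sA)$, that is, one with $\int_K\rho^k\,d\mu<\infty$ for every $k$, while retaining the domination $|L(a)|\le\int_K|a|\,d\mu$ on all of $\sA$; by the reformulation this finishes the proof.

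The main obstacle is precisely this assembly: the atomic measures $\sigma_n$ dominating $L$ on the ever larger spaces $\mathcal{N}_n(K)$ may a priori carry so much mass, or sit so far out in $\hat\sA$, that their moments $\int_K\rho^k\,d\sigma_n$ fail to be summable in $n$. Controlling these moments forces the finite-dimensionality hypothesis to be used globally rather than level by level, and this is the point at which I would invoke the topological result on unbounded operator algebras from \cite{sch80}. Concretely, I would realize the generators $a_1,\dots,a_d$ as commuting self-adjoint operators with $\bar\rho=1+\sum\bar a_j^2$ positive self-adjoint, so that positive functionals correspond to joint spectral measures supported in $\hat\sA$ and signed representing measures arise as differences of such spectral measures; the theorem of \cite{sch80} identifies the relevant locally convex topology on this operator algebra, and the finiteness of each $\mathcal{N}_n(K)$ translates into the compatibility needed to choose the $\sigma_n$ with summable moments, equivalently to show that the topology generated by the seminorms $a\mapsto\int_K|a|\,d\mu$, with $\mu\in M_+(\hat\sA)$ and $\supp\mu\subseteq K$, already has all of $\sA^*$ as its dual. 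Carrying out this translation, and thereby producing the single dominating measure, is the technical heart of the argument.
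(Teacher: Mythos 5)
Your reformulation of representability as a domination inequality $|L(a)|\le C\int_K|a(x)|\,d\mu(x)$ is correct (the Hahn--Banach plus $L^1$--$L^\infty$ duality argument works since $\mu$ is finite), and your proof of (i)$\Rightarrow$(ii) is sound and essentially the paper's own: your norm $\|\cdot\|_N$ is the paper's seminorm $q$, and the unbounded functional built from a Hamel basis is exactly the paper's construction. The problem is the main implication (ii)$\Rightarrow$(i). What you present there is a reduction to an ``assembly problem'' that you then explicitly leave unsolved: as you yourself observe, the level-by-level atomic measures $\sigma_n$ cannot simply be rescaled and summed, because rescaling destroys the uniform constant in the domination, and your final paragraph concedes that carrying out the needed argument ``is the technical heart.'' A proof proposal that defers the technical heart to an unspecified invocation of \cite{sch80} is not a proof; you never construct the operator algebra to which that theorem applies, never verify its hypothesis, and never explain how its conclusion (a statement about topologies) produces a measure.

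Concretely, three ingredients are missing, and they are what the paper actually supplies. First, a concrete $O^*$-algebra: the paper picks a countable weakly dense sequence $(x_n)$ in $K$ and uses the diagonal representation $\pi(a)(\varphi_n)=(a(x_n)\varphi_n)$ on the finite vectors of $\ell^2(\dN)$ --- not a spectral-theoretic realization of the generators as self-adjoint operators; your assertion that positive functionals ``correspond to joint spectral measures'' is not how the correspondence goes. Second, the hypothesis of the theorem of \cite{sch80} must be checked: each space $\cM_c=\{b\in\cA:\ |\langle b\varphi,\varphi\rangle|\le\lambda_b\|c\varphi\|^2\ \textrm{for all}\ \varphi\in\cD\}$ is finite-dimensional. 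The paper does this in two lemmas, showing $\cM_c\subseteq\cM_{b_k}$ for $b_k=\pi((1+a_1^2+\cdots+a_d^2)^k)$ and $\dim\cM_{b_k}\le 2\dim\cN_{2k}(K)$, and this is precisely where the separation hypothesis (injectivity of $\pi$) and the weak density of $(x_n)$ in $K$ enter; nothing in your sketch plays this role. Third, and most importantly, the bridge from ``the uniform topology $\tau_\cD$ is the finest locally convex topology, hence every functional is $\tau_\cD$-continuous'' to actual measures is \emph{not} a single dominating measure: it is the $\tau_\cD$-normality of the cone $\cA_+$ combined with Schaefer's theorem that normal cones are weakly normal, which decomposes $L=L_1-L_2$ into $\Pos(K)$-positive functionals, followed by Haviland's theorem applied to each $L_j$ to produce Radon measures supported on $K$. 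This decomposition-plus-Haviland step is entirely absent from your proposal, and without it the topological statement yields no representing measure at all.
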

The proof of Theorem \ref{main1} will be given in Section \ref{proofth2}. 

The following simple example shows that the separation assumption in Theorem \ref{main1}(ii) cannot be omitted.
\begin{exm}
Let $\sA$ be the quotient algebra of $\dR[x]$ by the ideal generated by $x^2$. That is, $\sA$ is the vector space $\{a+bx; a,b\in \dR\}$ with multiplication rule $(a+bx)(c+dx)=ac+(ad+bc)x$. Then $\sA$ has only one character which is given by $\chi(f)=f(0)$ and each general moment functional is of the form $L(f)=cf(0)$ for some $c\in \dR$. Note that $\hat{\sA}$ does not separate the points of $\sA$ and the linear functional $L_1(f)=f(1)$, $f\in {\sA}$, cannot be represented by some signed Radon measure on  $\hat{\sA}$.
\end{exm}
 \begin{rem}
The considerations and results of this paper extend  easily to each finitely generated commutative unital {\it complex} $*$-algebra $B$, with involution $b\to b^*$, and  linear functionals $L:B\mapsto \dC$. 
It suffices to apply the results for the commutative real algebra ${\sA}:=\{ b\in B: b=b^* \}$. The corresponding representing measures are then complex measures of the form $\mu=\mu_1-\mu_2+\im (\mu_3-\mu_4)$, where $\mu_1,\mu_2,\mu_3,\mu_4$ are (positive) Radon measures.
\end{rem}

\section{Application to polynomial algebras}\label{polyn}

In this section we turn to the special case where $\sA$ is the polynomial algebra $\dR[x_1,\dots,x_d]$ and $a_1=x_1,\dots, a_d=x_d$. As noted above,  $\hat{\sA}=\dR^d$. 
For a  subset $K$ of $\dR^d$ and  $n\in \dN_0$,  we define 
\begin{align*}
\cN_n(K):=\{ p \in &\dR[x_1,\dots,x_d]: \textrm{ There exists a number}~~ \lambda_p >0~ \textrm {such that}\\ & |p(x)|\leq \lambda_p\, (1+x_1^2+ \cdots+ x_d^2)^n~~ \textrm{for all} ~~ x\in K\}.
\end{align*}
Recall that a subset  $K$ of $\dR^d$ is called {\it Zariski dense} if it is not contained in the zero set of a polynomial $p\in \dR[x_1,\dots,x_d], p\neq 0$. Clearly, $K$ is Zariski dense if and only the point evaluations at $K$ separate polynomials of $\dR[x_1,\dots,x_d], p\neq 0$.

The following theorem  restates Theorem \ref{main1} in the present setting.
\begin{thm}\label{main2} Suppose that $K$ is a closed subset of $\dR^d$. Then the following statements are equivalent:
\begin{itemize}
\item[ (i)] Each linear functional on $\dR[x_1,\dots,x_d]$ is a general moment functional with support contained in $K$.
\item[ (ii)]  $K$ is Zariski dense and the linear space $\cN_n(K)$  is finite-dimensional for each $n\in \dN_0$.
\end{itemize}
\end{thm}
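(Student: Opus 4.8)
The plan is to derive Theorem~\ref{main2} as a direct specialization of Theorem~\ref{main1} to the algebra $\sA = \dR[x_1,\dots,x_d]$ with the canonical generators $a_j = x_j$. The work is essentially a translation of hypotheses: I must verify that under this identification, the two conditions in Theorem~\ref{main1}(ii) coincide verbatim with the two conditions in Theorem~\ref{main2}(ii). First I would recall that for the polynomial algebra the kernel $\cJ$ of $\pi$ is trivial, so $\hat{\sA} = \cZ(\{0\}) = \dR^d$ and the abstract characters $x_\chi$ are precisely the points of $\dR^d$. Consequently the condition ``$K$ is a closed subset of $\hat{\sA}$'' becomes ``$K$ is a closed subset of $\dR^d$,'' and statement (i) of Theorem~\ref{main1} becomes statement (i) of Theorem~\ref{main2} word for word, since a signed measure supported by $K \subseteq \dR^d$ is exactly one whose support (of both positive and negative parts) is contained in $K$.

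Next I would check that the two subspaces denoted $\cN_n(K)$ agree. In the abstract setting $\cN_n(K)$ is defined with the growth bound $|a(x)| \le C_a(1 + a_1(x)^2 + \cdots + a_d(x)^2)^n$; substituting $a_j = x_j$ turns $a_j(x)^2$ into $x_j^2$, so the defining inequality becomes exactly $|p(x)| \le \lambda_p(1 + x_1^2 + \cdots + x_d^2)^n$. Hence the polynomial $\cN_n(K)$ of Theorem~\ref{main2} is literally the specialization of the abstract $\cN_n(K)$, and the finite-dimensionality clauses match. The only genuinely non-tautological step is identifying the separation hypothesis: I would invoke the remark made in the excerpt that $K$ is Zariski dense if and only if the point evaluations at $K$ separate the polynomials, i.e.\ $p(x) = 0$ for all $x \in K$ forces $p = 0$. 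This is exactly the statement ``$K$ separates the points of $\sA$'' from Theorem~\ref{main1}, so the two separation conditions coincide.

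With all four ingredients aligned---the ambient space $\dR^d = \hat{\sA}$, the equivalence of the two versions of statement (i), the literal equality of the spaces $\cN_n(K)$, and the identification of Zariski density with point separation---the equivalence (i)$\Leftrightarrow$(ii) in Theorem~\ref{main2} follows immediately from the already-established equivalence in Theorem~\ref{main1}. I anticipate no substantial obstacle here, since the proof is a matter of unwinding definitions rather than new mathematics; the one place demanding a moment of care is confirming the Zariski-density reformulation of separation, but that is supplied by the observation recorded just before the statement of Theorem~\ref{main2}. I would therefore present the argument compactly, noting each correspondence and then citing Theorem~\ref{main1}.
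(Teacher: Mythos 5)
Your proposal is correct and matches the paper's own treatment: the paper presents Theorem~\ref{main2} simply as a restatement of Theorem~\ref{main1} for $\sA = \dR[x_1,\dots,x_d]$ with $a_j = x_j$, relying on the same identifications you spell out ($\hat{\sA} = \dR^d$, literal agreement of the spaces $\cN_n(K)$, and the remark that Zariski density of $K$ is equivalent to point evaluations at $K$ separating polynomials). Your write-up just makes explicit the definitional unwinding that the paper leaves implicit.
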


Next we give  applications of Theorem \ref{main1}.

The  case $d=1$ is settled completely by the following corollary. It is in fact  Polya's theorem \cite{polya}.
\begin{cor}
Let $K$ be closed subset of 
$\dR$. Then each linear functional on $\dR[x]$ is a general moment functional with support contained in $K$ if and only if $K$ is unbounded.
\end{cor}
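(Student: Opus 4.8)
The plan is to obtain this corollary as a direct specialization of Theorem \ref{main2} to the case $d=1$, where $\sA=\dR[x]$ and $\hat{\sA}=\dR$. By that theorem it suffices to show that condition (ii) there — that $K$ is Zariski dense and each $\cN_n(K)$ is finite-dimensional — holds precisely when $K$ is unbounded. The key feature of the one-variable situation that I would exploit is that the weight $(1+x^2)^n$ is a polynomial of degree $2n$ behaving asymptotically like $|x|^{2n}$ as $|x|\to\infty$.

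First I would treat the case where $K$ is unbounded. Since $K$ is then infinite and a nonzero univariate polynomial has only finitely many zeros, $K$ is automatically Zariski dense. To see that $\cN_n(K)$ is finite-dimensional, I would show that every $p\in\cN_n(K)$ has degree at most $2n$: choosing a sequence $(x_k)$ in $K$ with $|x_k|\to\infty$, the defining inequality $|p(x_k)|\le\lambda_p\,(1+x_k^2)^n$ forces the growth of $|p|$ along this sequence to be no faster than $|x_k|^{2n}$, and comparing leading terms gives $\deg p\le 2n$. Hence $\cN_n(K)$ is contained in the space of polynomials of degree at most $2n$, which has dimension $2n+1$, so condition (ii) holds and Theorem \ref{main2} yields (i).

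Conversely, suppose $K$ is bounded, say $K\subseteq[-R,R]$. Then every polynomial $p$ is bounded on $K$, and since $(1+x^2)^n\ge 1$ I may take $\lambda_p:=\max_{|x|\le R}|p(x)|$ to obtain $|p(x)|\le\lambda_p\le\lambda_p\,(1+x^2)^n$ for all $x\in K$. Thus $\cN_n(K)=\dR[x]$ is infinite-dimensional for every $n$, so condition (ii) fails, and hence so does (i).

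The argument is essentially routine once Theorem \ref{main2} is in hand; the only step requiring a little care is the asymptotic degree comparison in the unbounded case, which is precisely where the hypothesis enters, and I do not expect any serious obstacle there.
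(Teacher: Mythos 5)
Your proposal is correct and follows essentially the same route as the paper: both directions reduce to checking condition (ii) of Theorem \ref{main2}, with the bounded case giving $\dR[x]\subseteq\cN_n(K)$ and the unbounded case giving Zariski density plus the degree bound $\deg p\le 2n$ for $p\in\cN_n(K)$. Your asymptotic comparison along a sequence $|x_k|\to\infty$ simply fills in the detail the paper leaves as ``easily checked,'' so there is nothing to add.
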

\begin{proof}
If $K$ is bounded, then obviously $\dR[x]\subseteq \cN_0(K)$, so condition (ii) in Theorem \ref{main2} is not true. If $K$ is not bounded, then $K$ is infinite, hence  Zariski dense in $\dR$, and it is easily checked that  $\cN_n(K)=\{p\in \dR[x]:\deg p\leq 2n\}$. Thus,  condition (ii) Theorem \ref{main2} is fulfilled.
\end{proof}
In view of Theorem \ref{main2} it might be of interest to characterize the subsets $K$ of $\dR^d$ for which all linear subspaces $\cN_n(K)$ , $n\in \dN_0$, of $\dR[x_1,\dots,x_d]$  are finite-dimensional. Similarly, if $\sA$ is the coordinate algebra of some real algebraic variety, when are all spaces $\cN_n(K)$, as defined in Section \ref{prelim},  finite-dimensional?
\smallskip

Now we suppose that $d\geq 2$.  Then a simple sufficient condition is the following:\smallskip

(*)  {\it For each $j=1,\dots,d$ there exists a Zariski dense subset $M_j$ of $\dR^{d-1}$ such that for each $y=(y_1,\dots,y_{j-1},y_{j+1},\dots,y_d)\in M_j$ there exists a real sequence $(x_n)_{n\in \dN}$ such that\, $\lim_{n \to \infty}|x_n|=+\infty$ and }
\begin{align*}x_n(y):=(y_1, \dots,y_{j-1},x_n, y_{j+1},\dots,y_d)\in K~~\textrm{for} ~~n\in \dN.
\end{align*}

Roughly speaking, condition (*) means that the set $K$ is "unbounded in all directions".
\begin{cor}\label{appl}
Suppose  $d\geq 2$. If $K$ is Zariski dense and condition (*) holds, then each linear functional on $\dR[x_1,\dots,x_d]$ is a general moment functional with signed representing measure whose support is contained in $K$.
\end{cor}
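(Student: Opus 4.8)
By Theorem \ref{main2}, it suffices to verify that condition (ii) holds under the hypotheses of the corollary, namely that $K$ is Zariski dense (which is assumed outright) and that every linear space $\cN_n(K)$, $n\in\dN_0$, is finite-dimensional. Since Zariski density is given, the entire burden of the proof is the second assertion. So the plan is: fix $n\in\dN_0$, take an arbitrary $p\in\cN_n(K)$, and use condition (*) together with the growth bound $|p(x)|\le \lambda_p\,(1+x_1^2+\cdots+x_d^2)^n$ on $K$ to force $p$ to lie in a fixed finite-dimensional subspace of $\dR[x_1,\dots,x_d]$ — the obvious candidate being the polynomials of total degree at most $2n$, so that $\cN_n(K)\subseteq\{p:\deg p\le 2n\}$.

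\textbf{The one-variable mechanism.}
First I would isolate the key one-dimensional phenomenon, which is exactly the one used in the $d=1$ corollary: if a univariate polynomial $q(t)$ satisfies $|q(x_n)|\le C(1+x_n^2)^n$ along a sequence $x_n$ with $|x_n|\to\infty$, then $\deg q\le 2n$. This is immediate by dividing by $x_n^{2n}$ and letting $n\to\infty$: the leading coefficient of $q$ would blow up the left side if $\deg q>2n$. The substance of condition (*) is that it manufactures exactly such unbounded one-variable sequences inside $K$ in each coordinate direction.

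\textbf{Reducing the multivariable case, direction by direction.}
Fix $p\in\cN_n(K)$ and fix a coordinate $j$. For each $y\in M_j$ consider the univariate polynomial $t\mapsto p(y_1,\dots,y_{j-1},t,y_{j+1},\dots,y_d)$. Along the sequence $x_n(y)\in K$ furnished by (*), the growth bound gives $|p(x_n(y))|\le \lambda_p\,(1+|x_n(y)|^2)^n$, and since all coordinates except the $j$-th are frozen at the values $y$, the right-hand side is $C_y(1+x_n^2)^n$ for a constant $C_y$ depending only on $y$, $\lambda_p$, and $n$. The one-variable mechanism then shows that $p$, viewed as a polynomial in $x_j$ alone, has degree at most $2n$ for every $y\in M_j$. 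Because $M_j$ is Zariski dense in $\dR^{d-1}$, I would argue that the coefficients of the monomials $x_j^{k}$ with $k>2n$ in $p$ must vanish identically: each such coefficient is a polynomial in the remaining $d-1$ variables that vanishes on the Zariski-dense set $M_j$, hence is the zero polynomial. Running this over all $j=1,\dots,d$ shows that every monomial appearing in $p$ has degree at most $2n$ in each single variable, and summing these per-variable degree bounds yields a finite-dimensional containing space (e.g.\ $\deg_{x_j}p\le 2n$ for all $j$ confines $p$ to a fixed finite-dimensional space of polynomials). Thus $\cN_n(K)$ is finite-dimensional.

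\textbf{The main obstacle.}
The delicate point is passing from a degree bound that holds only \emph{slicewise} (for each individual $y\in M_j$) to a bound on the \emph{coefficients} of $p$. The clean way is to expand $p=\sum_k c_k(x_{\ne j})\,x_j^{k}$ and observe that $\deg_{x_j}\bigl(p(y,\cdot)\bigr)\le 2n$ for all $y\in M_j$ is equivalent to $c_k(y)=0$ for all $k>2n$ and all $y\in M_j$; Zariski density of $M_j$ then kills $c_k$ identically. I would take care that the constant $C_y$ genuinely depends only on the frozen coordinates (which it does, since $\lambda_p$ and $n$ are fixed and the frozen coordinates are bounded for fixed $y$), so that the one-variable limit argument applies verbatim to each slice. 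No compactness or uniformity across different $y$ is needed — the argument is purely algebraic once the per-slice degree bound is in hand — which is what makes the Zariski-density hypothesis on $M_j$ exactly the right tool.
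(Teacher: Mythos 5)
Your proposal is correct and takes essentially the same route as the paper: reduce to Theorem \ref{main2}(ii), expand $p$ in powers of $x_j$, use the unbounded sequences $x_n(y)\in K$ supplied by (*) together with the divide-and-take-limits trick to bound the slicewise degree by $2n$, and invoke Zariski density of $M_j$ to conclude that the coefficients of $x_j^k$ for $k>2n$ vanish identically. The only (cosmetic) difference is that the paper argues by contradiction with the top nonzero coefficient $p_i$, choosing $y\in M_j$ with $p_i(y)\neq 0$, whereas you show directly that every high coefficient $c_k$ vanishes on all of $M_j$; these are the same argument with the quantifiers handled in the opposite order.
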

\begin{proof}Suppose that (*) is satisfied. Let $m\in \dN_0$ and $p\in \cN_m(K)$. Fix  $j\in \{1,\dots,n\}$ and $y\in M_j$. We write $p$ as a sum of terms
\begin{align}
p(x)\equiv p(x_1,x_2,\dots,x_d)=\sum_{i=0}^{k_i} p_i(x_1,\dots,x_{j-1},x_{j+1},\dots,x_d)x_j^i
\end{align}
with polynomials $p_j\in \dR[t_1,t_2,\dots,t_{d-1}]$.  

We show that $p_i=0$ for $i>2m$. Assume the contrary. Let $i$ be the largest number such that $p_i\neq 0$.  Since $M_j$ is Zarsiki dense in $\dR^{d-1}$, we can choose $y\in M_j$ such that $p_i(y)\neq 0$. Now we insert the sequence of elements $x_n(y)\in K$ from condition (*) for $x$. Since $p\in \cN_m(K)$, there exists $\lambda_p>0$ such that 
\begin{align}
|p(x_n(y))|\leq \lambda_p (1+y_1^2+\cdots+y_{j-1}^2+x_n^2+y_{j+1}^2+\dots+y_{d}^2)^m
\end{align}
Now we divide both sides by $x_n^{2m}$ and pass to the limit $n \to \infty$. Since $\lim_{n\to \infty}  |x_n|=+\infty$, the left-hand side gives $|p_i(y)|$ and the right-hand side gives zero, a contradiction. 
This proves that $p_i=0$ for $i>2m$. That is, the degree of $p$ with respect to $x_j$ is at most $2m$. Therefore, $\dim \cN_m(K)\leq 2dm$ for all $m\in \dN_0$ and Theorem \ref{main2}  yields the assertion.
\end{proof}

In  particular, condition (*) is fulfilled for $K=\dR^d$ and $K=[0,+\infty)^d$. In this special case Corollary  \ref{appl} gives Sherman's theorem \cite{sherman}. Other simple applications are sets of the form $K=\dR^d\backslash M$ for compact sets $M$.

Note that for $d=2$ the requirement that $M_j$ is Zariski dense in $\dR$ is very simple: It suffices to assume that both sets $M_1$ and $M_2$ are infinite. 
\begin{exm}
Let $d=2$. If $K$ contains a $2$-dimensional affine cone of $\dR^2$, then Corollary \ref{appl} applies, so each linear functional on $\dR[x_1,x_2]$ is a general moment functional with signed representing measure supported by $K$. 

One may also take an infinite grid: If $(x_k)_{k\in \dN}$ and $(y_n)_{n\in \dN}$ are unbounded real sequences, then Corollary  \ref{appl} applies to the set $K:=\{(x_k,y_n): k,n\in \dN\}$.
\end{exm}
\begin{cor}\label{boundedpol} If there is a non-constant polynomial $p\in \dR[x_1,\dots,x_d]$ which is bounded on $K$, then there exist a linear functional  on $\dR[x_1,\dots,x_d]$ that is not  a general moment functional with support contained in $K$.
\end{cor}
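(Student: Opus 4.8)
The plan is to prove the contrapositive through Theorem \ref{main2}: since (i) and (ii) there are equivalent, it suffices to show that a non-constant polynomial bounded on $K$ forces condition (ii) to fail, and the cleanest way to do this is to exhibit an infinite-dimensional $\cN_n(K)$. So the whole strategy reduces to producing infinitely many linearly independent polynomials that are bounded on $K$.

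Concretely, I would start from a non-constant $p \in \dR[x_1,\dots,x_d]$ with $|p(x)| \le M$ for all $x \in K$ and some $M > 0$. The decisive remark is that boundedness survives taking powers: $|p(x)^k| = |p(x)|^k \le M^k$ on $K$ for every $k \in \dN$. Because $(1 + x_1^2 + \cdots + x_d^2)^n \ge 1$ everywhere, a uniform bound on $K$ yields membership in $\cN_n(K)$ for every $n$ simultaneously; indeed $\cN_0(K) \subseteq \cN_n(K)$ for all $n \in \dN_0$. Hence each power $p^k$ lies in every $\cN_n(K)$.

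It then remains to check that $1, p, p^2, p^3, \dots$ are linearly independent, and this is the single place where the hypothesis that $p$ is non-constant is used: with $\deg p = m \ge 1$ the powers have pairwise distinct degrees $km$, so they are automatically linearly independent. Consequently $\cN_n(K)$ is infinite-dimensional for every $n$, condition (ii) of Theorem \ref{main2} is violated, and therefore (i) must also fail, i.e.\ there is a linear functional on $\dR[x_1,\dots,x_d]$ that is not a general moment functional with support contained in $K$. There is no real obstacle here beyond this observation; the argument is a direct and short application of Theorem \ref{main2}, the only substantive content being that the degrees of the powers of a non-constant polynomial are unbounded.
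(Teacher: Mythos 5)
Your proof is correct and follows essentially the same route as the paper: the powers $p^k$ are all bounded on $K$, hence lie in $\cN_0(K)$, which makes $\cN_0(K)$ infinite-dimensional and violates condition (ii) of Theorem \ref{main2}. The only difference is that you spell out the linear independence of $1, p, p^2, \dots$ via their distinct degrees, a detail the paper leaves implicit.
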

\begin{proof}
All powers $p^m$ for $m\in\dN$ are in $\cN_0(K)$, so that  $\cN_0(K)$ has infinite dimension and the assertion follows from Theorem \ref{main2}.
\end{proof} Corollary \ref{boundedpol} applies (for instance) if $K$ is contained in a strip $[a,b]\times \dR$ in $\dR^2$, where $a,b\in \dR, a\leq b.$

\section{Preliminaries to the proof of Theorem \ref{main1}}\label{prelimth2}
The crucial step of the proof of the main implication (ii)$\to$(i) of Theorem 2 is a combination of a result on the uniform topology of an unbounded operator algebra  with a result on normal cones of ordered topological vector spaces.  
In this section we recall the corresponding notions and results. 

We  begin with unbounded operator algebras; proofs and more details can be found in  \cite{sch90} and \cite{sch20}.

Suppose that
$(\cD,\langle \cdot,\cdot \rangle)$ is a complex inner product space. Let $\|\cdot\|$ denote the norm on $\cD$ defined by $\|\varphi\|:=\langle\varphi,\varphi\rangle^{1/2}$, $\varphi\in \cD$. An {\it $O^*$-algebra} on $\cD$ is a subalgebra $\cA$ of the algebra $L(\cD)$ of linear mappings of $\cD$ into itself such that the identity
 map $I_\cD$ is in $\cA$ and for each $a\in \cA$ there exists  $b\in \cA$ satisfying 
\begin{align*}
 \langle a\varphi,\psi\rangle=\langle \varphi,b\psi\rangle\quad
 \text{for}~~ \varphi,\psi\in \cD .
\end{align*}
In this case, $b$ is  uniquely determined by $a$ and denoted by $a^+$. Then $\cA$ is a complex  unital $*$-algebra with involution $a\mapsto a^+$.

Let $\cA$ be an $O^\ast$-algebra on $\cD$. For\, $a\in \cA$\, we define a seminorm $\|\cdot\|_a$  on $\cD$  by\, 
$\|\varphi\|_a:=\|a\varphi\|$, $\varphi \in \cD$. 
The {\it graph topology} $t_\cA$ is the locally convex topology on $\cD$ determined by the family of seminorms $\|\cdot\|_a$, $ a\in \cA$. 
For a bounded subset $M$ of the locally convex space $\cD[t_\cA]$, let $p_M$ be the seminorm on $\cA$ defined by
\begin{align*}
p_M(a):=\sup_{\varphi,\psi\in M} ~|\langle a\varphi,\psi\rangle|,~~ a\in \cA.
\end{align*}
The locally convex topology on $\cA$ defined by the family of such seminorms $p_M$ is called the {\it uniform topology}  and denoted by $\tau_\cD$.

 The uniform topology was introduced by G. Lassner \cite{lassner}; it was extensively studied in the monograph \cite{sch90}. Note that if all operators $a\in \cA$ are bounded, then the graph topology $t_\cA$ is the norm topology of the  norm $\|\cdot\|$ on $\cD$ and the uniform topology $\tau_\cD$ is given by the operator norm. 

 From the polarization identity 
\begin{align*}4\langle a\varphi,\psi\rangle=&\langle a(\varphi+\psi),\varphi+\psi\rangle-\langle a(\varphi-\psi),\varphi-\psi\rangle\\ &+ {\ii}\langle a(\varphi +
{\ii}\psi),\varphi+{\ii} \psi\rangle-{\ii}\langle a(\varphi -{\ii}\psi),\varphi-{\ii} \psi\rangle
\end{align*}
for $a\in \cA$ and $ \varphi,\psi\in \cD$  it follows that the uniform topology $\tau_\cD$ is also generated by the family of seminorms
\begin{align}\label{newsemi}
p'_M(a):=\sup_{\varphi\in M} ~|\langle a\varphi,\varphi\rangle|,~~ a\in \cA.
\end{align}

The crucial technical result for our approach is the following.
\begin{prop}\label{uniform} Suppose  $\cA$ is an $O^*$-algebras on $\cD$ which is countably generated as a $*$-algebra. Then the uniform topology $\tau_\cD$ coincides with the finest locally convex topology $\tau_{st}$ on the vector space $\cA$ if and only if
for each $a\in \cA$ the vector space
\begin{align*}
\cM_a:=\{ b\in \cA: & \textrm{There exists a number}~~ \lambda_b>0~ \textrm{such that}\\ & ~ |\langle b\varphi,\varphi\rangle|\leq \lambda_b \|a\varphi\|^2 ~ \textrm{for all}~ \varphi\in \cD\}
\end{align*}
is finite-dimensional.
\end{prop}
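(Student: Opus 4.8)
The plan is to exploit that $\tau_{st}$ is the \emph{finest} locally convex topology, so that $\tau_\cD \le \tau_{st}$ holds automatically and only the reverse inclusion is at stake. For the implication ``$\tau_\cD=\tau_{st}$ $\Rightarrow$ each $\cM_a$ finite-dimensional'' I would equip $\cM_a$ with the norm $\|b\|_a := \sup\{|\langle b\varphi,\varphi\rangle|/\|a\varphi\|^2 : a\varphi\neq 0\}$ (a genuine norm, since $\langle b\varphi,\varphi\rangle=0$ for all $\varphi$ forces $b=0$ by the polarization identity) and consider its unit ball $B_a=\{b\in\cM_a:\|b\|_a\le 1\}$. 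For every bounded set $M\subseteq\cD[t_\cA]$ one has $p'_M(b)\le\sup_{\varphi\in M}\|a\varphi\|^2 =: C_M<\infty$ for all $b\in B_a$, so $B_a$ is $\tau_\cD$-bounded, hence $\tau_{st}$-bounded. Since in the finest locally convex topology every bounded set lies in a finite-dimensional subspace and $B_a$ is absorbing in $\cM_a$, this forces $\dim\cM_a<\infty$.

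For the converse I would first build, from the countable generating set, an increasing sequence of monotone seminorms generating $t_\cA$: enumerating a countable spanning set $w_1=1,w_2,\dots$ of monomials, put $\|\varphi\|_k:=\max_{i\le k}\|w_i\varphi\|$, so that $\|\varphi\|_k\le\|\varphi\|_{k+1}$ and $\{\|\cdot\|_k\}$ generates $t_\cA$. Writing $c_k:=\sum_{i\le k}w_i^+w_i$ and $a_k:=1+c_k\in\cA$, one checks $\|\varphi\|_k^2\le\|a_k\varphi\|^2$, whence the space $\cM^{(k)}:=\{b\in\cA:\ |\langle b\varphi,\varphi\rangle|\le\lambda_b\|\varphi\|_k^2\ \text{for all }\varphi\}$ satisfies $\cM^{(k)}\subseteq\cM_{a_k}$ and is therefore finite-dimensional by hypothesis; moreover $\cM^{(k)}\subseteq\cM^{(k+1)}$ and $\cA=\bigcup_k\cM^{(k)}$. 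Thus $\cA$ is the union of an increasing sequence of finite-dimensional subspaces, on each of which $\tau_\cD$, being Hausdorff, already agrees with $\tau_{st}$.

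The key intermediate step is then to show that \emph{every} $\tau_\cD$-bounded set $B$ is finite-dimensional. Setting $F(\varphi):=\sup_{b\in B}|\langle b\varphi,\varphi\rangle|$, if $\sup_\varphi F(\varphi)/\|\varphi\|_k^2=+\infty$ for every $k$ one may pick $\varphi_k$ with $\|\varphi_k\|_k=1$ and $F(\varphi_k)\ge k$; monotonicity of the seminorms then gives $\|c\varphi_k\|\le\lambda_c$ for $k$ large, so $\{\varphi_k\}$ is $t_\cA$-bounded and $\sup_{b\in B}p'_{\{\varphi_k\}}(b)\ge\sup_k F(\varphi_k)=+\infty$, contradicting boundedness of $B$. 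Hence $F(\varphi)\le\lambda\|\varphi\|_k^2$ for some $k,\lambda$, that is $B\subseteq\cM^{(k)}$, which is finite-dimensional. In order-theoretic terms this says exactly that every order interval of the Hermitian part of $\cA$, ordered by the cone of positive elements, is finite-dimensional, while the monotonicity implication $|\langle a\varphi,\varphi\rangle|\le\langle b\varphi,\varphi\rangle\Rightarrow p'_M(a)\le p'_M(b)$ shows that this cone is normal and generating for $\tau_\cD$.

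The final, and hardest, step is to upgrade ``all $\tau_\cD$-bounded sets are finite-dimensional'' to the equality $\tau_\cD=\tau_{st}$. I expect this to be the main obstacle, and it cannot be settled by an elementary construction of a single bounded test set $M$: matching a shrinking neighbourhood $\overline{\mathrm{aconv}}\bigcup_k\varepsilon_k B^{(k)}$ would require rescaling the $k$-th batch of test vectors by factors tending to infinity, which destroys $t_\cA$-boundedness of $M$. Indeed, merely having all bounded sets finite-dimensional does not characterize the finest topology (the weak topology $\sigma(\cA,\cA^*)$ shares that property), so the order structure must enter in an essential way. At this point I would invoke the normal-cone theorem of \cite{sch80}: for a space carrying a normal generating cone with order units $a_k$ exhausting it, finiteness of all order intervals forces the given topology to coincide with the finest locally convex one. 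Feeding in the data assembled above yields $\tau_\cD=\tau_{st}$, completing the proof.
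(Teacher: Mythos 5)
Your necessity argument (norming $\cM_a$, observing its unit ball is $\tau_\cD$-bounded, hence $\tau_{st}$-bounded, hence finite-dimensional) is correct, and your reduction of the sufficiency direction is genuine progress: the cofinal sequence $a_k$, the increasing finite-dimensional spaces $\cM^{(k)}$ with $\cA=\bigcup_k\cM^{(k)}$, and the proof that every $\tau_\cD$-bounded set is contained in some $\cM^{(k)}$ are all sound. But the proof fails exactly where you yourself locate the main difficulty. Note first that the paper does not prove Proposition \ref{uniform} at all: its ``proof'' is the citation \cite[Theorem 1]{sch80} (see also \cite[Theorem 4.5.4]{sch90}); the proposition \emph{is} Theorem 1 of \cite{sch80}. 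So when, at the crux, you ``invoke the normal-cone theorem of \cite{sch80}'', you are in substance citing the statement you set out to prove; nothing beyond the reduction has actually been established.

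Moreover, the order-theoretic principle you formulate to close the gap --- a normal generating cone with exhausting order units and finite-dimensional order intervals forces the topology to coincide with $\tau_{st}$ --- is false as stated, and the counterexample is the very one you mention. Whenever the proposition's conclusion holds with $\dim\cA=\infty$, every linear functional on $\cA_h$ is a difference of $\cA_+$-positive functionals (this is precisely how the paper combines $\tau_\cD=\tau_{st}$ with Proposition \ref{weaklyn}); by the standard characterization of weak normality (a cone is $\sigma(E,F)$-normal if and only if its dual cone generates $F$, see \cite[Ch.~V, \S 3]{schaefer}), the cone $\cA_+$ is then normal also for the weak topology $\sigma(\cA_h,\cA_h^*)$. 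This topology has the same order intervals, the same exhausting order units $a_k$, and the same (finite-dimensional) bounded sets, yet it is strictly coarser than $\tau_{st}$ in infinite dimensions. So normality of the cone cannot be the extra ingredient that upgrades ``all $\tau_\cD$-bounded sets are finite-dimensional'' to $\tau_\cD=\tau_{st}$. Any correct argument must exploit the specific form of the seminorms $p'_M$: given an arbitrary seminorm $p$ on $\cA$, one must construct a single $t_\cA$-bounded set $M\subset\cD$ of test vectors with $p\leq C\,p'_M$, overcoming exactly the rescaling obstruction you describe. That construction is the actual content of \cite[Theorem 1]{sch80}, and it is the step missing from your proposal.
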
 
\begin{proof}
\cite[Theorem 1]{sch80}, see also \cite[Theorem 4.5.4]{sch90}.
\end{proof}

Next we turn to ordered vector spaces, see  \cite[Chapter V]{schaefer} for a detailed treatment. Suppose that $E$ is a real vector space. By a {\it cone} in $E$ we mean a non-empty subset $C$ of $E$ such that $\lambda x\in C$ and $x+y\in C$ for all $x,y\in C$ and $\lambda\in [0,+\infty)$. A linear functional $L:E\to \dR$ is said to be {\it $C$-positive} if $L(x)\geq 0$ for all $x\in C$. 
 
 Let $\tau$ be a locally convex topology on $E$. The vector space of continuous linear functionals $L:E\to \dR$ is denoted by $E[\tau]'$. A cone $C$ of $E$ is called {\it $\tau$-normal} if there exists a generating family $\{p_j; j\in J \}$ of $C$-monotone seminorms for $\tau$, that is, $p_j(x)\leq p_j(x+y)$ for all $x,y\in C$ and $j\in J$ (see e.g. \cite[Chapter V, 3.1]{schaefer}).
 In the terminology of ordered vector spaces the next proposition 
 says that normal cones are weakly normal. This is the second technical ingredient of our proof.
\begin{prop}\label{weaklyn} Suppose that $\tau$ is a locally convex topology on $E$ and $C$ is a $\tau$-normal cone in $E$. Then for each linear functional $L\in E[\tau]'$ there exist $C$-positive linear functionals $L_1,L_2\in E[\tau]'$ such that $L=L_1-L_2$.\end{prop}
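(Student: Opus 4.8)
The plan is to realize $L$ as an upper bound, in the dual order induced by $C$, of both the zero functional and $L$ itself, by a single Hahn--Banach argument tailored to the cone. Concretely, I would construct a continuous $C$-positive functional $L_1$ that dominates $L$ on the cone (i.e. $L_1(c)\ge L(c)$ for all $c\in C$) and then set $L_2:=L_1-L$; positivity of $L_2$ on $C$ is then immediate from $L_2(c)=L_1(c)-L(c)\ge 0$, and continuity of $L_2$ is clear since it is a difference of continuous functionals.

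First I would fix a convenient seminorm. Since $C$ is $\tau$-normal there is a generating family of $C$-monotone seminorms for $\tau$, and since $L\in E[\tau]'$ is continuous, finitely many of them, suitably scaled and maximized, produce a single continuous seminorm $p$ that is still $C$-monotone and satisfies $|L(x)|\le p(x)$ for all $x\in E$. The only consequences of monotonicity I will use are $p(c)\le p(c+c')$ for $c,c'\in C$ and, via $|L|\le p$, the bound $L(c)\le p(c)$.

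Next, the core of the argument is the sublinear functional
\[
q(x):=\inf\{\, p(x+c+c')-L(c) : c,c'\in C \,\},\qquad x\in E.
\]
The two cone summands play distinct roles: the term $-L(c)$ is what will force $L_1\ge L$ on $C$, while the extra free summand $c'$ is what will force $L_1\ge 0$ on $C$. I would check that $q$ is positively homogeneous and subadditive by the standard splitting of the cone variables ($c=c_1+c_2$, $c'=c_1'+c_2'$). The delicate point -- and the only place where normality is genuinely used -- is that $q$ is finite rather than $-\infty$: for $c,c'\in C$ one has $L(c)\le p(c)\le p(c+c')\le p(x+c+c')+p(x)$, whence $p(x+c+c')-L(c)\ge -p(x)$ and thus $q(x)\ge -p(x)$; the choice $c=c'=0$ gives $q(x)\le p(x)$.

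Finally I would invoke the Hahn--Banach theorem to obtain a linear $L_1$ on $E$ with $L_1\le q$, and read off its three properties by substituting judicious cone elements. From $L_1\le q\le p$ together with $L_1(-x)\le q(-x)\le p(x)$ I obtain $|L_1|\le p$, so $L_1\in E[\tau]'$. For $c_0\in C$ the choice $c=0,\,c'=c_0$ gives $q(-c_0)\le 0$, hence $L_1(c_0)\ge 0$, so $L_1$ is $C$-positive; the choice $c=c_0,\,c'=0$ gives $q(-c_0)\le -L(c_0)$, hence $L_1(c_0)\ge L(c_0)$. Setting $L_2:=L_1-L$ then yields $L=L_1-L_2$ with both $L_1,L_2\in E[\tau]'$ and both $C$-positive. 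I expect the main obstacle to be precisely the finiteness of $q$, that is, pinning down the exact form of the sublinear functional for which monotonicity of $p$ keeps the infimum from escaping to $-\infty$; once that form is found, the remaining verifications are routine extractions from the Hahn--Banach extension.
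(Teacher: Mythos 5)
Your proof is correct, but it takes a genuinely different route from the paper: the paper does not prove this proposition at all, it simply cites \cite[Corollary 3.1, Chapter V]{schaefer} (the classical fact that a normal cone is weakly normal, i.e.\ that the dual cone spans the topological dual), whereas you supply a complete, self-contained Hahn--Banach argument for that fact. All the key steps check out: the reduction to a single continuous $C$-monotone seminorm $p$ with $|L|\le p$ (a finite maximum of scaled monotone seminorms is again monotone and continuous); the finiteness bound $q(x)\ge -p(x)$, which is indeed the only place normality enters, via the chain $L(c)\le p(c)\le p(c+c')\le p(x+c+c')+p(x)$; sublinearity of $q$ by splitting the cone variables; and the extraction of positivity (take $c=0$, $c'=c_0$) and of the domination $L_1\ge L$ on $C$ (take $c=c_0$, $c'=0$) from $L_1\le q$, after which $L_2:=L_1-L$ is positive and continuous. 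The two-variable sublinear functional $q(x)=\inf\{p(x+c+c')-L(c): c,c'\in C\}$ is a nice device: the term $-L(c)$ forces $L_1\ge L$ on $C$, the free summand $c'$ forces $L_1\ge 0$ on $C$, and nowhere do you need $C$ to be generating, so your argument covers exactly the generality of the statement. In comparison, the paper's citation buys brevity but hides the mechanism; your version buys self-containedness and makes visible precisely where $C$-monotonicity of the seminorms is used.
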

\begin{proof} \cite[Corollary 3.1 Chapter V]{schaefer}.
\end{proof}

Now we bring both topics together. Suppose $\cA$ is an $O^*$- algebra on $\cD$. Then
\begin{align*}
\cA_+:=\{a \in \cA: \langle a\varphi,\varphi\rangle \geq 0\quad \textrm{for all}~~ \varphi\in \cD\}
\end{align*}
is a cone in the real vector space 
\begin{align*}
\cA_h:=\{ b\in \cA: b^+=b\}.
\end{align*}
For each bounded subset $M$ of $\cD[t_\cA]$ and $a\in \cA_+$, we have $$p'_M(a)= \sup_{\varphi\in M} ~ \langle a\varphi,\varphi\rangle.$$ This obviously implies that each seminorm $p'_M$ is $\cA_+$-monotone. Since the family of seminorms $p_M'$ defined by  (\ref{newsemi})  generates the topology $\tau_\cD$ as well,  we conclude that the cone $\cA_+$ is normal with respect to the uniform topology $\tau_\cD$ on $\cA_h$.
\section{Proof of Theorem \ref{main1}}\label{proofth2}
In order to apply Proposition \ref{uniform} we have to pass to a complex $*$-algebra. The complexification of the real algebra ${\sA}$ (see e.g. \cite[p. 10]{sch20}) is the direct sum vector space  $A_\dC=A\oplus \im\, A$ with multiplication and involution defined by
\begin{align*}
(a+{\im}\, b)(c+{\im}\, d):= ab-cd+ {\im}\,(bc+ad),\, (a+{\im}\, b)^+:=a-{\im}\, b, ~ a,b,c,d \in A.
\end{align*} 
Then $A_\dC$ is a commutative unital complex $*$-algebra and its hermitean part is $A$, that is, $(A_\dC)_h=\{a\in A_\dC: a^+=a\, \} =A$.
Each character $\chi$ of $\sA$ extends uniquely to a character, denoted again $\chi$,  of ${\sA}_\dC$ by $\chi(a+{\im} b)=\chi(a)+ {\im } \chi(b)$, where $a,b\in {\sA}$. 
Since $\sA$ is finitely generated and the characters of $K$ separate the points of $\sA$, there exists a  sequence $(x_n)_{n\in \dN}$ of characters $x_n \in K$
such that the countable set $K_0:=\{x_n:n\in  \dN\}$  is weakly dense in $K$.  Define
\begin{align*}
\Pos (K) =\{a\in {\sA}:  a(x)\geq 0 ~~ \textrm{for all}~~ x\in K\}.
\end{align*}

Now we consider the Hilbert space $l^2(\dN)$. Let $\cD$ be its dense linear subspace of  ''finite" vectors $(\varphi_1,\dots, \varphi_n,0,0,\dots)$, $n\in \dN$.
We define a mapping $\pi:{\sA}_\dC \to L(\cD)$ by 
\begin{align}
\pi(a)(\varphi_n):=(a(x_n)\varphi),\quad a\in{\sA}_\dC, ~ (\varphi_n)\in \cD.
\end{align} 
Since the $x_n$ are characters of  ${\sA}_\dC$, $\pi$ is an algebra homomorphism and we have $\pi(1)=I_\cD$. From
\begin{align}\label{scalarp}
\langle &\pi(a)(\varphi_n),(\psi_n)\rangle =\sum_n a(x_n)\varphi_n \, \ov{\psi_n}=\sum_n \varphi_n \ov{a^*(x_n)\psi_n}=\langle (\varphi_n),\pi(a^*)(\psi_n)\rangle
\end{align}
we conclude that $\pi$ is a $*$-homomorphism of ${\sA}_\dC$ on some $O^*$-algebra $\cA:=\pi({\sA}_\dC)$. That is, $\pi$ is a $*$-representation of the $*$-algebra ${\sA}_\dC$ with domain $\cD$ (see e.g. \cite[Definition 4.2]{sch20}).
\begin{lem}\label{posi}
$\pi(\Pos(K))=\cA_+\equiv \pi({\sA})_+$.
\end{lem}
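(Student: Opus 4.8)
The plan is to exploit the fact that, by construction, $\pi$ realizes every element of $\sA_\dC$ as a diagonal operator on $\cD\subset l^2(\dN)$, and to read off positivity entrywise. Concretely, for $c\in\sA_\dC$ the operator $\pi(c)$ multiplies the $n$-th coordinate of $\varphi=(\varphi_n)\in\cD$ by $c(x_n)$, so that $\langle\pi(c)\varphi,\varphi\rangle=\sum_n c(x_n)|\varphi_n|^2$. In particular, if $a\in\Pos(K)$ then $a(x_n)\ge 0$ for every $n$, because each $x_n$ lies in $K$; hence $\langle\pi(a)\varphi,\varphi\rangle\ge 0$ for all $\varphi\in\cD$ and $\pi(a)\in\cA_+$. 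This settles the easy inclusion $\pi(\Pos(K))\subseteq\cA_+$.

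For the reverse inclusion I would start from an arbitrary $T\in\cA_+$ and first pin down its form. Writing $T=\pi(c)$ with $c\in\sA_\dC$, evaluation on the canonical basis vectors $e_k\in\cD$ gives $\langle Te_k,e_k\rangle=c(x_k)$, and the positivity condition forces $c(x_k)\in[0,+\infty)$ for every $k$. In particular each $c(x_k)$ is real, so the imaginary part $b:=\I(c)\in\sA$ satisfies $b(x_k)=0$ for all $k$ and therefore $\pi(b)=0$ as an operator; consequently $T=\pi(a)$ with $a:=\R(c)\in\sA$, which is exactly the identification $\cA_+=\pi(\sA)_+$. Moreover $a(x_k)=c(x_k)\ge 0$ for all $k$, that is, $a\ge 0$ on the countable set $K_0=\{x_k\}$.

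It remains to promote nonnegativity from $K_0$ to all of $K$. Here I would use that every element of $\sA$ is a continuous function on $\hat{\sA}$ together with the weak density of $K_0$ in $K$: since $a\ge 0$ on the dense subset $K_0$ and $a$ is continuous in the topology for which $K_0$ is dense, it follows that $a\ge 0$ on $K$, i.e.\ $a\in\Pos(K)$, whence $T=\pi(a)\in\pi(\Pos(K))$. The only step that is not a formal coordinatewise computation is this last one, and it is also the conceptual heart of the lemma: the passage from the countably many test inequalities $a(x_n)\ge 0$ to the full condition $a\ge 0$ on $K$ rests entirely on the continuity of the functions in $\sA$ and on the density of $K_0$, which is where the separation hypothesis and the particular choice of the sequence $(x_n)$ enter.
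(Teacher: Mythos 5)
Your proof is correct and takes essentially the same route as the paper: the forward inclusion via the diagonal computation $\langle \pi(a)\varphi,\varphi\rangle=\sum_n a(x_n)|\varphi_n|^2$, the reverse inclusion by testing positivity on the standard basis vectors $e_k$, and the passage from $K_0$ to all of $K$ via weak density and continuity of the functions in $\sA$. The only difference is that you make explicit the reduction from a general $c\in\sA_\dC$ to a real element (showing $\pi(\I(c))=0$), a point the paper leaves implicit when it writes elements of $\cA_+$ as $\pi(a)$ with $a\in\sA$.
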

\begin{proof}
If $a\in \Pos(K)$, then $a(x_n)\geq 0$ for all $n\in \dN$ and hence $\pi(a)\in \cA_+$ by (\ref{scalarp}). Conversely, let $\pi(a)\in \cA_+$. Then (\ref{scalarp}), applied with $\varphi_n=\psi_n=\delta_{kn}$ for $k,n\in \dN$, yields $a(x_k)\geq 0$. Since the set $K_0$ is weakly dense in $K$, this implies that $a(x)\geq 0$ for all $x\in K$, that is, $a\in \Pos(K)$.
\end{proof}
\begin{lem}\label{finite1}
 Set $b_k:=\pi((1+a_1^2+\cdots+a_d^2)^k)$ for $k\in \dN_0$.
For $c\in \cA$ there exists $k\in \dN_0$ such that $\cM_c\subseteq \cM_{b_k}$.
\end{lem}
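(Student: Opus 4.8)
The plan is to exploit the fact that the representation $\pi$ acts by diagonal operators, so that membership in the spaces $\cM_c$ is governed by scalar growth estimates along the sequence $(x_n)$. First I would record the explicit form of the relevant quantities: for $\alpha,\beta\in{\sA}_\dC$ and $\varphi=(\varphi_n)\in\cD$ one has, since $\pi(\alpha)$ is the diagonal operator with entries $\alpha(x_n)$, that $\langle \pi(\beta)\varphi,\varphi\rangle=\sum_n \beta(x_n)\,|\varphi_n|^2$ and $\|\pi(\alpha)\varphi\|^2=\sum_n |\alpha(x_n)|^2\,|\varphi_n|^2$. From this I would deduce, for any $c=\pi(\alpha)\in\cA$ and $b=\pi(\beta)\in\cA$, the pointwise description
\[
b\in\cM_c \iff \exists\,\lambda>0:\ |\beta(x_n)|\le \lambda\,|\alpha(x_n)|^2\ \text{for all } n\in\dN.
\]
The necessity of the right-hand condition follows by testing the defining inequality on the basis vectors $\varphi=(\delta_{kn})_k$, exactly as in the proof of Lemma \ref{posi}; its sufficiency follows from the triangle inequality $|\sum_n \beta(x_n)|\varphi_n|^2|\le \sum_n|\beta(x_n)|\,|\varphi_n|^2$, which makes the estimate uniform in $\varphi\in\cD$.

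With this reformulation, write $c=\pi(a)$ and recall that $b_k=\pi(g^k)$ with $g:=1+a_1^2+\cdots+a_d^2$, so that the entries of $b_k$ are $g(x_n)^k$ and the description above identifies $\cM_{b_k}$ with the set of $\pi(\beta)$ for which $|\beta(x_n)|\le\lambda\,g(x_n)^{2k}$ holds for all $n$. Hence the inclusion $\cM_c\subseteq\cM_{b_k}$ reduces to producing a single $k\in\dN_0$ and a constant $C>0$ with
\[
|a(x_n)|^2\le C\,g(x_n)^{2k}\qquad\text{for all } n\in\dN ;
\]
indeed, given such $k,C$, any $b=\pi(\beta)\in\cM_c$ satisfies $|\beta(x_n)|\le\lambda\,|a(x_n)|^2\le \lambda C\,g(x_n)^{2k}$, whence $b\in\cM_{b_k}$.

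It remains to verify this growth estimate. Splitting $a=a'+\im\,a''$ with $a',a''\in{\sA}$ gives $|a(x_n)|^2=a'(x_n)^2+a''(x_n)^2$, so it suffices to bound each real element of ${\sA}$. Under the identification $\hat{\sA}\subseteq\dR^d$ with $a_j(x)$ the $j$-th coordinate, every $p\in{\sA}$ is the restriction to $\hat{\sA}$ of a genuine polynomial in the coordinates, while $g(x)=1+\|x\|^2$. A standard monomial-by-monomial estimate shows that a real polynomial of degree at most $2k$ on $\dR^d$ satisfies $|p(x)|\le C\,(1+\|x\|^2)^{k}$ for all $x\in\dR^d$, because each monomial $x^\alpha$ with $|\alpha|\le 2k$ obeys $|x^\alpha|\le(1+\|x\|^2)^{|\alpha|/2}\le(1+\|x\|^2)^{k}$. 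Setting $m:=\max(\deg a',\deg a'')$ and choosing $k:=\lceil m/2\rceil$ therefore yields $a'(x_n)^2+a''(x_n)^2\le C\,g(x_n)^{2k}$, which is the required bound, and the lemma follows.

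I expect no serious obstacle here: the whole point is that the diagonal structure of $\pi$ collapses the operator inequalities defining $\cM_c$ and $\cM_{b_k}$ into scalar growth conditions along $(x_n)$, after which one only invokes the classical fact that every polynomial is dominated by a power of $1+\|x\|^2$. The one point demanding care is making the pointwise reformulation rigorous in both directions (necessity via the basis vectors, sufficiency via the triangle inequality), ensuring the constant $\lambda$ works uniformly over all $\varphi\in\cD$.
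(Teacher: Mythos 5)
Your proof is correct and takes essentially the same route as the paper: decompose $c$ into its hermitian (real and imaginary) parts, write each as a polynomial in the generators, and invoke the global bound $|p(t)|\le\lambda\,(1+t_1^2+\cdots+t_d^2)^k$ on $\dR^d$, exploiting the diagonal form of $\pi$. The only difference is packaging: the paper proves the norm domination $\|c\varphi\|\le 2\lambda\|b_k\varphi\|$ for all $\varphi\in\cD$, from which $\cM_c\subseteq\cM_{b_k}$ is immediate, while you first convert membership in $\cM_c$ into an entrywise growth condition along $(x_n)$ and then apply the same estimate at the scalar level.
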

\begin{proof}
We write $c=c_1+\ii\, c_2$ with $c_j=c_j^+$ for $j=1,2$. Then there exist $y_j\in {\sA}$ such that $\pi(y_j)=c_j$ for $j=1,2$. Since $\{a_1,\dots,a_d\}$ is a set of generators of the real algebra $\sA$ there exist polynomials $p_j\in \dR[t_1,\dots,t_d]$ such that $c_j=p_j(a_1,\dots,a_d)$. There are numbers $\lambda>0$ and $k\in \dN_0$ such that  $|p_j(t_1,\dots,t_d)|\leq \lambda(1+t_1^2+\cdots+t_d^2)^k$ for all $(t_1,\dots,t_d)\in \dR^d$ and $j=1,2$. 
For $(\varphi_n)\in \cD$ and $j=1,2$, we derive
 \begin{align*}
&\|c_j(\varphi_n)\|^2=\| \pi(y_j)(\varphi_n)\|^2 =\| (y(x_n))(\varphi_n)\|^2=\sum_n |p_j(a_1,\dots,a_d)(x_n)\varphi_n|^2\\&=\sum_n |p_j(a_1(x_n),\dots,a_d(x_n))\varphi_n|^2\leq
\sum_n |\lambda (1+a_1(x_n)^2+\dots+a_d(x_n)^2)^k\varphi_n|^2\\&=\lambda^2\sum_n  |(1+a_1^2+\dots+a_d^2)^k(x_n)\varphi_n|^2=\lambda^2\sum_n |b_k(x_n)\varphi_n|^2 =\lambda^2 \|b_k(\varphi_n)\|^2
\end{align*}
and therefore $\|c(\varphi_n)\|\leq \|c_1(\varphi_n)\|+\|c_2(\varphi_n)\|\leq 2\lambda\, \|b_k(\varphi_n)\|$. This implies that $\cM_c\subseteq \cM_{b_k}$.
\end{proof}

\begin{lem}\label{finite2} Suppose that $\pi$ is injective. Then  $\dim \cM_{b_k}\leq 2\dim \cN_{2k}(K)$.
\end{lem}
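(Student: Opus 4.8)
The plan is to produce an explicit $\dR$-linear injection of $\cM_{b_k}$ into the Cartesian square $\cN_{2k}(K)\times\cN_{2k}(K)$ and then count dimensions. Because $\pi$ is assumed injective, the complexification decomposition gives, for every $c\in\cA$, a unique pair $a,b\in\sA$ with $c=\pi(a+\im\,b)$; I would associate to $c$ this pair $(a,b)$. The assertion of the lemma follows once I show that $c\in\cM_{b_k}$ forces $a,b\in\cN_{2k}(K)$: the map $c\mapsto(a,b)$ is then a well-defined $\dR$-linear map $\cM_{b_k}\to\cN_{2k}(K)\times\cN_{2k}(K)$, and it is injective since $\pi$ is injective and $a,b$ are uniquely determined by $a+\im\,b$. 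The factor $2$ in the bound is exactly this real/imaginary splitting; counting real dimensions then yields $\dim\cM_{b_k}\le 2\dim\cN_{2k}(K)$.

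First I would evaluate, for a finite vector $\varphi=(\varphi_n)\in\cD$, the two quantities entering the definition of $\cM_{b_k}$. Writing $w(x):=1+a_1(x)^2+\cdots+a_d(x)^2$, the definitions of $\pi$ and of $b_k$ give
\begin{align*}
\langle c\varphi,\varphi\rangle=\sum_n\bigl(a(x_n)+\im\,b(x_n)\bigr)\,|\varphi_n|^2,\qquad
\|b_k\varphi\|^2=\sum_n w(x_n)^{2k}\,|\varphi_n|^2 .
\end{align*}
Since $\varphi$ ranges over all finite vectors, the numbers $t_n:=|\varphi_n|^2$ range over all finitely supported non-negative sequences, so the condition $c\in\cM_{b_k}$ says that $|\sum_n(a(x_n)+\im\,b(x_n))t_n|\le\lambda_c\sum_n w(x_n)^{2k}t_n$ for all such $(t_n)$. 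Testing with $t_n=\delta_{nm}$, i.e. with a single basis vector $\varphi=e_m$, shows that $c\in\cM_{b_k}$ implies the pointwise estimate $|a(x_m)+\im\,b(x_m)|\le\lambda_c\,w(x_m)^{2k}$ for every $m\in\dN$; in particular $|a(x_m)|\le\lambda_c w(x_m)^{2k}$ and $|b(x_m)|\le\lambda_c w(x_m)^{2k}$.

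The one step that is not purely formal is upgrading these bounds from the countable set $K_0=\{x_m:m\in\dN\}$ to all of $K$, and this is where I expect the real work to lie. Here I would use that $a$, $b$ and $w$, being polynomials in the generators $a_1,\dots,a_d$, are continuous functions on $\hat{\sA}$; hence $\{x\in\hat{\sA}:|a(x)|\le\lambda_c\,w(x)^{2k}\}$ is closed, and since it contains the weakly dense subset $K_0$ of the closed set $K$ it must contain $K$. The same applies to $b$, giving $a,b\in\cN_{2k}(K)$, which is precisely what the injection requires. (Conversely, $a,b\in\cN_{2k}(K)$ gives $c\in\cM_{b_k}$ via $|a(x)+\im\,b(x)|\le|a(x)|+|b(x)|$, so the map is even onto $\cN_{2k}(K)\times\cN_{2k}(K)$; but only the containment direction is needed for the stated inequality.) Assembling the pieces, $c\mapsto(a,b)$ is an injective $\dR$-linear map into $\cN_{2k}(K)\times\cN_{2k}(K)$, whence $\dim\cM_{b_k}\le 2\dim\cN_{2k}(K)$. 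The main point to get right is thus the continuity/density passage of this last paragraph; the remainder is bookkeeping with the explicit form of $\pi$.
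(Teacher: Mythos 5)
Your proof is correct and follows essentially the same route as the paper: both test the defining inequality of $\cM_{b_k}$ on the standard basis vectors $e_m$, upgrade the resulting pointwise bounds from $K_0$ to all of $K$ via weak density and continuity, and obtain the factor $2$ from a real/imaginary splitting combined with the injectivity of $\pi$. The only cosmetic difference is the order of the splitting: you decompose $c=\pi(a+\im\, b)$ at the level of $\sA_\dC$ before testing on $e_m$ (so that $|a(x_m)+\im\, b(x_m)|$ dominates both parts at once), whereas the paper first treats hermitian $c$ and then notes that the hermitian components $c_1,c_2$ of a general $c\in\cM_{b_k}$ again lie in $\cM_{b_k}$.
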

\begin{proof}
Suppose that $c=c^+\in \cM_{b_k}$. Then we can find  $y\in A$ such that $c=\pi(y)$. Further, there is a polynomial $p\in \dR[t_1,\dots.,t_d]$ such that  $y=p(a_1,\dots,a_d)$. Let $e_n=(\delta_{ln})_{l\in \dN}$. Since $c\in \cM_{b_k}$, there is $\lambda>0$ such that
\begin{align*}
|y(x_n)|& =|\langle \pi(c) e_n,e_n\rangle| \leq \lambda\, \|\pi(b_k) e_n\|^2=\lambda\, |b_k(x_n)|^2\\ &=\lambda(1+a_1(x_n)^2+\cdots+a_d(x_n)^2)^{2k}
\end{align*}
 and therefore\, $|y(x)|\leq \lambda\, (1+a_1(x)^2+\cdots+a_d(x)^2)^{2k}$\, for all $x\in K$, because the set $K_0$ is weakly dense in $K$. Hence $y\in \cN_{2k}(K)$. 
 
 This shows that $\cM_{b_k}\cap \cA_h \subseteq \pi(\cN_{2k}(K))$. Since $\pi$ is injective, $\dim (\cM_{b_k}\cap \cA_h)\leq \dim \cN_{2k}(K)$. Writing $c\in \cM_{b_k}$ as $c=c_1+{\im} \, c_2$ with $c_1,c_2\in \cA_h$, we have $c_1,c_2\in \cM_{b_k}$. Hence $\dim \cM_{b_k}\leq 2\dim \cN_{2k}(K).$
 \end{proof}
 
After these preparations  we are ready to prove Theorem \ref{main1}.\smallskip

\noindent
{\it Proof of Theorem \ref{main1}}:\\(ii)$\to$(i): 
 By  assumption (ii),  $K$ separates the points of $\sA$ and all spaces $\cN_{n}(K)$, $n\in \dN$,  are finite-dimensional. The first implies that  $\pi$ is injective. Therefore, Lemma \ref{finite1} and \ref{finite2} imply that all spaces $\cM_c$ for $c\in \cA$  are finite-dimensional.  Therefore, by Proposition \ref{uniform}, the uniform topology $\tau_\cD$ on the $O^*$-algebra $\cA$ coincides with the finest locally topology $\tau_{st}$.
 
 Let $L:A\to \dR$ be a linear functional. Then $L'(\pi(a)):=L(a), a\in{\sA},$ defines 
  is a linear functional $L':\cA_h\to \dR$. Recall that the cone $\cA_+$ is $\tau_\cD$-normal. Since $\tau_\cD=\tau_{st}$, $L'$ is $\tau_\cD$-continuous on $\cA_h$. From Proposition \ref{weaklyn}, applied with $C=\cA_+$ and $E=\cA_h$, it follows that $L'$ is the difference $L_1'-L_2'$\, of $\cA_+$-positive functionals. Define $L_j(a):=L_j'(\pi(a))$, $a\in {\sA}$, for $j=1,2$. Then we have $L=L_1-L_2$ by construction. Since 
$\pi(\Pos(K))=\cA_+$ by Lemma \ref{posi}, $L_1$ and $L_2$ are $\Pos(K)$-positive. Hence, by a version of Haviland's theorem \cite{haviland} (in the version stated in  \cite[Theorem 1.14]{sch17}, see also \cite[Theorem 3.2.2]{marshall}) it follows that $L_1$ and $L_2$ can be given by Radon measures on $\hat{A}$ supported by the closed set $K$. Thus, $L=L_1-L_2$ is a general moment functional with representing signed measure supported by $K$. \smallskip

(i)$\to$(ii): First we verify that $K$ separates the points of $\sA$. Assuming the contrary  there exists $a\in {\sA}$, $a\neq 0$, such that $a(x)=0$ for all $x\in K$. Obviously, each  linear functional $L:{\sA}\to \dR$ for which $L(a)\neq 0$ cannot be represented by an integral with respect to some signed measure supported by $K$. This contradicts (i).

Now we prove that all vector spaces $\cN_n(K)$ are finite-dimensional.
Assume to the contrary that there exists $n\in \dN_0$ such that $\cN_n(K)$ is not finite-dimensional. For $a\in \cN_n(K)$ we define
\begin{align*}
q(a):=\inf~ \{ \lambda>0: |a(x)|\leq \lambda\, (1+a_1(x)^2+\cdots+ a_d(x)^2)^n~~\textrm{for all} ~x\in K\, \}.
\end{align*}
 It is straightforward to verify that $q$ is a seminorm on the
  real vector space $\cN_n(K)$. 
  
  We abbreviate $c_n:=(1+a_1^2+\cdots+a_d^2)^n$.
 For $j=1,2$ let $L_j$  be a moment functional on $\sA$ supported  by $K$. It follows at once from the definition of $q(a)$ that $|L_j(a)|\leq q(a)L_j(c_n)$ for all $a\in \cN_n(K)$. Therefore, if $L=L_1-L_2$ is a general moment functional with signed measure supported by $K$, then 
 \begin{align}\label{infiniteqL}
 |L(a)|\leq q(a) (L_1(c_n)+L_2(c_n))\quad \textrm{for}~~ a\in \cN_n(K).
 \end{align}
 If $q(a)=0$ for some $a\in \cN_n(K)$, then $a(x)=0$ for all $x\in K$ and hence $a=0$, because by assumption $K$ separates the points of $\sA$. That is, $q(a)\neq 0$ if $a\in \cN_n(K)$ and $a\neq 0$. We choose a Hamel basis $\{a_j: j\in J\}$ of the infinite-dimensional (!) vector space $\cN_n(K)$ and define a linear functional $L_n$ on $\cN_n(K)$ such that $\sup_{j\in J} |L_n(a_j)|q(a_j)^{-1}=+\infty$. 
 Then  (\ref{infiniteqL}) cannot hold  for $L_n.$ Each extension of $L_n$  to a linear functional $L$ on $\sA$ is not  a general moment functional with representing signed measure supported by $K$. This  contradicts (i).
 
 This completes the proof of Theorem 2.
$\hfill \Box$

\section*{Acknowledgement}

This research was carried out during my stay at the Zukunftskolleg of the University of Konstanz. I would like to thank Dr. Ph. di Dio and Prof. C. Scheiderer for their kind hospitality.
\bibliographystyle{amsalpha}

\end{document}